\def\Hom{\mathop{\rm Hom}\nolimits}
\def\Fac{\mathop{\rm Fac}\nolimits}
\def\mod{\mathop{\rm mod}\nolimits}
\def\add{\mathop{\rm add}\nolimits}
\def\tilt{\mathop{\rm tilt}\nolimits}
\def\silt{\mathop{\rm silt}\nolimits}
\def\tilt{\mathop{\rm \tau\makebox{-}tilt}\nolimits}
\def\stilt{\mathop{\rm s\tau\makebox{-}tilt}\nolimits}
\def\H{\mathop{\rm H}\nolimits}
\def\rad{\mathop{\rm rad}\nolimits}
\def\Soc{\mathop{\rm Soc}\nolimits}
\begin{document}

\newcommand{\nc}{\newcommand}



\newtheorem{theorem}{Theorem}[section]
\newtheorem{proposition}[theorem]{Proposition}
\newtheorem{lemma}[theorem]{Lemma}
\newtheorem{corollary}[theorem]{Corollary}
\newtheorem{conjecture}[theorem]{Conjecture}
\newtheorem{question}[theorem]{Question}
\newtheorem{definition}[theorem]{Definition}
\newtheorem{example}[theorem]{Example}

\newtheorem{remark}[theorem]{Remark}
\def\Pf#1{{\noindent\bf Proof}.\setcounter{equation}{0}}
\def\>#1{{ $\Rightarrow$ }\setcounter{equation}{0}}
\def\<>#1{{ $\Leftrightarrow$ }\setcounter{equation}{0}}
\def\bskip#1{{ \vskip 20pt }\setcounter{equation}{0}}
\def\sskip#1{{ \vskip 5pt }\setcounter{equation}{0}}
\def\bg#1{\begin{#1}\setcounter{equation}{0}}
\def\ed#1{\end{#1}\setcounter{equation}{0}}
\def\KET{T^{^F\bot}\setcounter{equation}{0}}
\def\KEC{C^{\bot}\setcounter{equation}{0}}

\renewcommand{\thefootnote}{\fnsymbol{footnote}}
\setcounter{footnote}{0}
%
%


\title{\bf A note on the Hasse quiver of $\tau$-tilting modules
\thanks{This work was partially supported by NSFC (Grant No. 11971225). } }
\footnotetext{
E-mail:~hpgao07@163.com}
\smallskip
\author{\small Hanpeng Gao\\
{\it \footnotesize Department of Mathematics, Nanjing University, Nanjing 210093, Jiangsu Province, P.R. China}}
\date{}
\maketitle
\baselineskip 15pt
%
%
\begin{abstract}
Let $\Lambda$ be an algebra with a  indecomposable projective-injective module. Adachi gave a method to construct the Hasse quiver of support $\tau$-tilting $\Lambda$-modules. In this paper, we will show that it can be restricted to $\tau$-tilting modules.
\vspace{10pt}

\noindent {\it 2020 Mathematics Subject Classification}: 16G20, 16G60.


\noindent {\it Key words}:  $\tau$-tilting modules, Hasse quiver, support $\tau$-tilting quiver.

\end{abstract}
%
\vskip 30pt

\section{Introduction}
In 2014,   Adachi, Iyama and Reiten\cite{AIR} introduced  the concept of support $\tau$-tilting modules as a generalization of tilting modules. They showed that, in contrast to tilting modules, it is always possible to exchange a given indecomposable summand of a support $\tau$-tilting module for a unique other indecomposable and obtain a new support $\tau$-tilting module. This process, called mutation, is essential in cluster theory.  In the same paper, the authors also showed that the support $\tau$-tilting modules are in bijection with several other important classes in representation theory including
functorially finite torsion classes introduced in
 \cite{AS1981}, 
 2-term silting complexes introduced in \cite{KV1988}, and cluster-tilting objects in the cluster category.

 Let $\Lambda$ be a finite dimensional $K$-algebra   over an algebraically closed field $K$.  A  $\Lambda$-module $M$ is called {\it  $\tau$-tilting} if $\Hom_\Lambda(M,\tau M)=0$ and $|M|=|\Lambda|$. A module is called $support$ $\tau$-$tilting$ if it is a $\tau$-tilting $\Lambda/\Lambda e\Lambda$-module for some idempotent $e$ of $\Lambda$.  We will denote by $\tilt \Lambda$ (respectively, $\stilt \Lambda$) the set of isomorphism classes of basic $\tau$-tilting (respectively, support $\tau$-tilting) $\Lambda$-modules. 
For any two support $\tau$-tilting $\Lambda$-modules $M$, $N$, we write $M\geqslant N$  if $\Fac(M)\supseteq \Fac(N)$. Then $\geqslant$ gives a partial order on support $\tau$-tilting $\Lambda$-modules. The associated Hasse quiver (support $\tau$-tilting quiver) $\H(\Lambda)$ is as follows:

$\bullet$ The set of vertices is all basic support $\tau$-tilting $\Lambda$-modules.

$\bullet$ Draw an arrow from $M$ to $N$ if $M>N$ and there is no support $\tau$-tilting $\Lambda$-module $L$ such that $M>L>N$.

 Let $\mathcal{N}$ be a subposet of $\H(\Lambda)$ and $\mathcal{N}':=\H(\Lambda)\setminus \mathcal{N}$.  Adachi define a new quiver $\H(\Lambda)^{\mathcal{N}}$  from $\H(\Lambda)$ as follows(see \cite[Definition 3.2]{Adachi2016}).

$\bullet$ vertices :   vertices in $\H(\Lambda)$ and $\mathcal{N}^+$ where $\mathcal{N}^+$ is a copy of $\mathcal{N}$.

$\bullet$ arrows:  $\{a_1\to a_2\mid a_1\to a_2 \in \mathcal{N}'\} \coprod \{n_2\to a_2\mid n_2\to a_2,  n_2\in \mathcal{N}, a_2\in \mathcal{N}'\}$

~~~~~~~~\qquad$\coprod \{n_1\to n_2, n^+_1\to n^+_2\mid n_1\to n_2\in \mathcal{N}\}$$\coprod \{a_1\to n^+_1\mid a_1\to n_1, n_1\in \mathcal{N}, a_1\in \mathcal{N}'\}$

~~~~~~~~\qquad$\coprod \{n^+_1\to n_1\mid n_1\in \mathcal{N}\}.$

$$\xymatrix@R=2PT@C=5PT{a_1\ar[dddd]\ar[rrd]&&&&&&&&a_1\ar[dddd]\ar[rrd]&&&\\
&&n_1\ar[dd]&&&&&&&&n_1^+\ar[rd]\ar[ld]&\\
&&&&&&&&&n_1\ar[rd]&&n_2^+\ar[ld]\\
&&n_2\ar[lld]&&&&&&&&n_2\ar[lld]&\\
a_2&&&&&&&&a_2&&&\\
&\H(\Lambda)&&&&&&&&\H(\Lambda)^{\mathcal{N}}\\}.$$

Suppose that $\Lambda$ is  a basic finite dimensional $K$-algebra  with an indecomposable projective-injective module $Q$, Adachi shown that $\Soc(Q)$ is  a two-sided ideal of $\Lambda$(see \cite[Proposition 3.1(1)]{Adachi2016}).  Let $\overline{\Lambda}:=\Lambda/\Soc(Q)$ and 
$$\mathcal{N}_1:=\{N\in \stilt \overline{\Lambda}\mid Q/\Soc(Q)\in \add N~\text{and} ~\Hom_\Lambda(N,Q)=0 \}.$$
It is shown that there is an isomorphism of posets $\H(\Lambda)\to\H{(\overline{\Lambda})}^{\mathcal{N}_1}$\cite[Theorem 3.3]{Adachi2016}.

In this paper, we will show that this result can be restricted to $\tau$-tilting modules. More precisely, for an algebra $\Lambda$, let $Q(X)$ be the full subquiver of $\H( \Lambda)$ consisting of those vertices in $X$ for a sbuset $X$ of $\H(\Lambda)$. 
Now, considering the set $$\mathcal{N}:=\{N\in \stilt \overline{\Lambda}\mid Q/\Soc(Q)\in \add N,~\Hom_\Lambda(N,Q)=0~\text{and}~ |N|=|\overline \Lambda|-1 \},$$
we have the following result.
\begin{theorem}\label{1} Let $\Lambda$ be  an algebra with an indecomposable projective-injective module $Q$.
\begin{enumerate}
\item[(1)] If $Q$ is simple, then  there is an isomorphism  $$Q(\tilt \Lambda) \to Q(\tilt\overline \Lambda).$$ 
\item[(2)]  If $Q$ is not simple, then  there is an isomorphism  $$Q(\tilt \Lambda) \to Q(\tilt\overline \Lambda \coprod \mathcal{N}).$$ Moreover, if $Q/\Soc(Q)$ has $\Soc(Q)$ as a composition factor, then $\mathcal{N}=\emptyset$. Hence, we have  an isomorphism  $Q(\tilt \Lambda) \to Q(\tilt\overline \Lambda)$.
\end{enumerate}
\end{theorem}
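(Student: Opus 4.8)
The plan is to push the statement through Adachi's poset isomorphism $\phi\colon\H(\Lambda)\to\H(\overline\Lambda)^{\mathcal N_1}$ of \cite[Theorem 3.3]{Adachi2016}, identifying on both sides exactly which vertices correspond to $\tau$-tilting modules. I start from the standard fact that, over any finite dimensional algebra $A$, a basic support $\tau$-tilting $A$-module lies in $\tilt A$ if and only if it is \emph{sincere}, i.e.\ every simple $A$-module is one of its composition factors (equivalently, the projective part of the corresponding support $\tau$-tilting pair vanishes). Hence $Q(\tilt A)$ is precisely the full subquiver of $\H(A)$ on the sincere vertices, and the problem reduces to deciding, for each vertex $V$ of $\H(\overline\Lambda)^{\mathcal N_1}$, whether $\phi^{-1}(V)$ is sincere over $\Lambda$, and then matching the full subquiver on those $V$ with the claimed target.

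The next step is to use the explicit form of $\phi^{-1}$ on modules, as extracted from the proof of \cite[Theorem 3.3]{Adachi2016}: a base vertex $N\in\stilt\overline\Lambda$ is sent to $N$ regarded over $\Lambda$ (or, when $Q/\Soc(Q)\in\add N$ and $\Hom_\Lambda(N,Q)\ne0$, to the module obtained from $N$ by replacing the summand $Q/\Soc(Q)$ with $Q$), while a vertex $N^{+}$ with $N\in\mathcal N_1$ is sent to $N\oplus Q$. Everything then follows from composition-factor bookkeeping using: (a) the composition factors of $Q$ are those of $Q/\Soc(Q)$ together with one copy of the simple module $\Soc(Q)$ (as $\Soc(Q)\subseteq\rad(Q)$ when $Q$ is not simple, and trivially otherwise); (b) since $Q$ has simple socle, a nonzero map $N\to Q$ has image containing $\Soc(Q)$, so $\Soc(Q)$ is a composition factor of $N$ whenever $\Hom_\Lambda(N,Q)\ne0$; (c) a support $\tau$-tilting module has as many pairwise non-isomorphic composition factors as indecomposable summands, being a sincere module over the quotient algebra on which it is tilting. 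With (a)--(c) one checks first that $\phi^{-1}(N)$ has the same composition-factor set as $N$, so it is sincere over $\Lambda$ iff $N$ is sincere over $\overline\Lambda$ iff $N\in\tilt\overline\Lambda$; in case (1), where $\overline\Lambda$ has one fewer simple than $\Lambda$, this never happens, since $N$ is an $\overline\Lambda$-module whereas $Q=\Soc(Q)$ is a simple $\Lambda$-module. Second, for $N\in\mathcal N_1$ one has $Q\notin\add N$ (because $\Soc(Q)\cdot Q\ne0$ while $N$ is an $\overline\Lambda$-module), so $N\oplus Q$ is a basic support $\tau$-tilting $\Lambda$-module with $|N|+1\le|\Lambda|$ summands; thus it is sincere iff it has $|\Lambda|$ summands iff $|N|=|\Lambda|-1$ (the forward implication because a support $\tau$-tilting module with the maximal number of summands is $\tau$-tilting, hence sincere; the converse from (a), (c) and $Q/\Soc(Q)\in\add N$). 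Inside $\mathcal N_1$, the condition $|N|=|\Lambda|-1$ singles out $\tilt\overline\Lambda$ when $\mathcal N_1=\stilt\overline\Lambda$ (case (1), where $|\overline\Lambda|=|\Lambda|-1$), and singles out $\mathcal N$ in case (2).

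Combining the two computations, $\phi$ restricts to a bijection of vertices from $\tilt\Lambda$ onto $(\tilt\overline\Lambda)^{+}$ in case (1), and onto $\tilt\overline\Lambda\sqcup\mathcal N^{+}$ in case (2). To conclude, I would check that the full subquiver of $\H(\overline\Lambda)^{\mathcal N_1}$ on these vertices is exactly $Q(\tilt\overline\Lambda)$, resp.\ $Q(\tilt\overline\Lambda\sqcup\mathcal N)$; this is a direct comparison with the definition of the arrows of $\H(\overline\Lambda)^{\mathcal N_1}$, the only nontrivial point being that in $\H(\overline\Lambda)$ there is no arrow from a vertex $N$ with $|N|=|\Lambda|-1$ into a vertex of $\tilt\overline\Lambda$ --- since a Hasse arrow is a left mutation and hence cannot increase the number of indecomposable summands, while such an arrow would require exactly that. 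Finally, the ``moreover'' clause of (2): if $\Soc(Q)$ is a composition factor of $Q/\Soc(Q)$, then for every $N\in\mathcal N_1$ it is, via the summand $Q/\Soc(Q)$ of $N$, a composition factor of $N$, so by (a) the composition-factor set of $N\oplus Q$ equals that of $N$, which has only $|N|<|\Lambda|$ elements; thus $N\oplus Q$ is never sincere, forcing $\mathcal N=\emptyset$ and $Q(\tilt\Lambda)\cong Q(\tilt\overline\Lambda)$.

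The main obstacle I anticipate is pinning down the explicit action of $\phi^{-1}$ on modules precisely enough: one has to reopen the proof of \cite[Theorem 3.3]{Adachi2016} and verify that Adachi's isomorphism genuinely sends $N^{+}$ to $N\oplus Q$ and base vertices to the lifts described above --- not merely to modules occupying the corresponding positions in the Hasse quiver --- since the whole composition-factor argument rests on this. Everything else is standard $\tau$-tilting theory or elementary bookkeeping.
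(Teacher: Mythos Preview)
Your approach is correct and takes a genuinely different route from the paper's.

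The paper does \emph{not} work through Adachi's quiver isomorphism $\phi\colon\H(\Lambda)\to\H(\overline\Lambda)^{\mathcal N_1}$. Instead it defines the map $\alpha(M)$ directly as the basic module with $\add\alpha(M)=\add\overline M$, decomposes $\tilt\Lambda$ into three pieces $\mathcal M_1,\mathcal M_2,\mathcal M_3$ (according to whether $Q$ and $\overline Q$ lie in $\add M$), and uses \cite[Proposition~3.7]{Adachi2016} to biject these with the three pieces $\mathcal N_1,\mathcal N_2=\mathcal N,\mathcal N_3$ of the target. Compatibility with the order is then checked by a short case analysis on $\Fac$: if $\alpha(M)\geqslant\alpha(L)$, one argues separately for $L\in\mathcal M_1$, $L\in\mathcal M_2$, $L\in\mathcal M_3$ that $M\geqslant L$, using only that $\overline M\in\Fac M$ and elementary observations about which summands and composition factors occur. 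The paper never unpacks the arrow structure of $\H(\overline\Lambda)^{\mathcal N_1}$, never invokes sincerity, and does not explicitly treat the ``moreover'' clause.

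What each approach buys: yours is more conceptual --- it is a genuine restriction of an existing quiver isomorphism, so the conclusion at the level of full subquivers is immediate once you have matched the vertex sets, and your left-mutation argument that no Hasse arrow in $\H(\overline\Lambda)$ runs from $\mathcal N$ into $\tilt\overline\Lambda$ cleanly handles the only nontrivial arrow comparison. You also supply a proof of the ``moreover'' clause (indeed your composition-factor observation shows the stronger fact $\mathcal N_1=\emptyset$, since $\Soc(Q)$ being a composition factor of $\overline Q\in\add N$ already forces $\Hom_\Lambda(N,Q)\ne0$). The price is the obstacle you flag: you must reopen Adachi's proof to pin down $\phi^{-1}$ on modules. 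The paper's approach avoids this by relying only on the module-level \cite[Proposition~3.7]{Adachi2016}, at the cost of a more ad hoc case analysis for order preservation.
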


As an application, we can calculate  the number of  $\tau$-tilting modules  over linearly Dynkin type algebras whose
square radical are zero.

Throughout this paper, all algebras will be basic,  connected, finite dimensional $K$-algebras over an algebraically closed field $K$.  For an algebra $\Lambda$, we denote by $\mod \Lambda$ 
 the category of finitely generated right $\Lambda$-modules and by $\tau$ the Auslander-Reiten
  translation of $\Lambda$.  For  $M\in \mod \Lambda$, we also  denote by $|M|$ the number of pairwise nonisomorphic indecomposable summands of $M$ and by $\add M$  the full subcategory of $\mod \Lambda$ consisting of direct summands of finite direct sums of copies of $M$. For a set $X$, we denote by $|X|$ the cardinality of $X$.  For two sets $X,Y$, $X\coprod Y$ means the disjoint union.

\section{Main results}

Let $\Lambda$ be an algebra. We always assume that $\Lambda$ has an indecomposable projective-injective module $Q$ and $\overline \Lambda:=\Lambda/\Soc (Q)$. Considering the following functor
$$\overline{(-)}:=-\otimes_\Lambda\overline \Lambda ~:\mod \Lambda\to \mod\overline \Lambda .$$
Then we have $\overline{Q}=Q/\Soc (Q)$ and $\overline{M}\cong M$ for all indecomposable $\Lambda$-modules $M$ which are not isomorphism to $Q$ by \cite[Proposition 3.1(2)]{Adachi2016}.  We will denote by  $\alpha(M)$ a basic $\Lambda$-module such that $\add(\alpha(M))=\add\overline M$.

We need the following lemma.
\begin{lemma} \label{2.1}Assume $Q$ is not simple and $U\in\mod\overline \Lambda$ does not have $\overline Q$ as a direct summand. Then
\begin{enumerate}
\item[(1)]  $U\in\tilt \Lambda$ if and only if  $U\in\tilt \overline \Lambda$.
\item[(2)] $Q\oplus\overline Q\oplus U\in\tilt \Lambda$ if and only if  $\overline Q\oplus U\in\stilt  \Lambda$ and $|\overline Q\oplus U|=|\Lambda|-1$ if and only if  $\overline Q\oplus U\in\stilt\overline  \Lambda$, $|\overline Q\oplus U|=|\overline \Lambda|-1$ and $\Hom_\Lambda(\overline Q\oplus U, Q)=0$.
\item[(3)] $Q\oplus U\in\tilt \Lambda$ if and only if  $\overline Q\oplus U\in\tilt \Lambda$ and $\Hom_\Lambda(\overline Q\oplus U, Q)\ne 0$.
\end{enumerate}
\end{lemma}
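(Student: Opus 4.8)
The plan is to reduce everything to the Adachi machinery already recalled in the introduction, using the characterizations of support $\tau$-tilting modules via functorially finite torsion classes and the bijection with the subposet $\mathcal{N}_1$. First I would recall the basic counting fact: since $Q$ is projective-injective and not simple, $\overline{Q}=Q/\Soc(Q)$ is a nonzero $\overline\Lambda$-module, $\overline{M}\cong M$ for every indecomposable $M\not\cong Q$, and passing between $\mod\Lambda$ and $\mod\overline\Lambda$ only affects the summand $Q$. So for a basic module $T$ with $Q\notin\add T$ we have $T\in\mod\overline\Lambda$ and $|T|$ is computed the same way over either algebra; the only subtlety is that $|\Lambda|=|\overline\Lambda|+?$ — in fact $|\Lambda|=|\overline\Lambda|$ since $\Lambda$ and $\overline\Lambda$ have the same number of simple modules (killing $\Soc(Q)$ does not remove a vertex of the quiver). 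I would state this at the outset.

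For part (1): if $U\in\tilt\overline\Lambda$ then $|U|=|\overline\Lambda|=|\Lambda|$, and $\Hom_{\overline\Lambda}(U,\tau_{\overline\Lambda}U)=0$; using $\overline{M}\cong M$ for the indecomposable summands of $U$ together with the compatibility of $\tau$ under the quotient (here I would invoke that, since $U$ has no summand $\overline Q$, the relevant $\Hom$ and $\tau$ computations agree — this is exactly the content behind \cite[Proposition 3.1]{Adachi2016}), one gets $\Hom_\Lambda(U,\tau_\Lambda U)=0$, so $U\in\tilt\Lambda$. The converse is symmetric. The main technical point to nail down is the comparison of $\tau_\Lambda$ and $\tau_{\overline\Lambda}$ on modules avoiding $Q$ and $\overline Q$; I expect this is where the cited proposition does the work and I would cite it rather than reprove it.

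For parts (2) and (3), the key structural input is that $Q$ being projective-injective forces a dichotomy for any $\tau$-tilting module $T$ over $\Lambda$: since $|T|=|\Lambda|$ and $T$ is a tilting-like object, either $Q\in\add T$ or not, and when $Q\in\add T$ one has $\overline Q\in\add T$ as well (because $\overline Q=Q/\Soc Q$ is generated by $Q$ and the torsion class $\Fac T$ is closed under quotients, while a dimension/rank count at the vertex of $\Soc Q$ forces $\overline Q$ to appear). This splits the $\tau$-tilting modules containing $Q$ into those of the form $Q\oplus\overline Q\oplus U$ (handled in (2)) versus $Q\oplus U$ with $\overline Q\notin\add U$ and $\overline Q\notin\add U$ but $\overline Q\oplus U$ still $\tau$-rigid (handled in (3)). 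For (2), $Q\oplus\overline Q\oplus U\in\tilt\Lambda$ means $|U|=|\Lambda|-2$ and $\overline Q\oplus U$ is a $\tau$-rigid module with $|\overline Q\oplus U|=|\Lambda|-1$; removing the indecomposable summand $Q$ from a $\tau$-tilting module and checking it stays support $\tau$-tilting is precisely mutation at $Q$ together with \cite[Theorem 2.18]{AIR} (an almost complete $\tau$-tilting module with $|\,\cdot\,|=|\Lambda|-1$ is either a $\tau$-tilting module minus a summand, i.e. support $\tau$-tilting). Then $\overline Q\oplus U\in\stilt\Lambda$ with $\overline Q\in\add(\overline Q\oplus U)$ translates, via the poset isomorphism $\H(\Lambda)\cong\H(\overline\Lambda)^{\mathcal{N}_1}$ and the definition of $\mathcal{N}_1$, into $\overline Q\oplus U\in\stilt\overline\Lambda$ with $\overline Q=Q/\Soc Q\in\add(\overline Q\oplus U)$ and $\Hom_\Lambda(\overline Q\oplus U,Q)=0$, plus the rank condition $|\overline Q\oplus U|=|\overline\Lambda|-1$. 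For (3), $Q\oplus U\in\tilt\Lambda$ with $\overline Q\notin\add U$: here $\Fac(Q\oplus U)=\Fac(\overline Q\oplus U)$ because $Q$ and $\overline Q$ differ only by the simple socle $\Soc Q$, which is already in $\Fac(\overline Q\oplus U)$ precisely when $\Hom_\Lambda(\overline Q\oplus U,Q)\ne 0$ (equivalently, the injective $Q$ is not in the perpendicular, so the torsion class is large enough); combining this with part (1)-type arguments to upgrade $\tau$-rigidity of $\overline Q\oplus U$ to the $\tau$-tilting property over $\Lambda$ gives the stated equivalence. The main obstacle I anticipate is the bookkeeping in (3): carefully verifying that $\overline Q\oplus U$ is $\tau_\Lambda$-rigid (not just $\tau_{\overline\Lambda}$-rigid) and that the non-vanishing $\Hom_\Lambda(\overline Q\oplus U,Q)\ne 0$ is exactly the obstruction that distinguishes "$Q\oplus U$ is $\tau$-tilting over $\Lambda$" from "$\overline Q\oplus U$ is already $\tau$-tilting and $Q$ is redundant" — this requires a short argument with the socle $\Soc Q$ and the projective cover of $Q$ over $\Lambda$, but no heavy machinery beyond \cite{AIR} and \cite{Adachi2016}.
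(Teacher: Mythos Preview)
The paper's own proof is a two-line reduction: a $\Lambda$-module $M$ is $\tau$-tilting if and only if it is support $\tau$-tilting and $|M|=|\Lambda|$, and then all three parts are read off directly from \cite[Proposition~3.7]{Adachi2016}, which already classifies the support $\tau$-tilting modules of $\Lambda$ in terms of those of $\overline\Lambda$ together with the condition $\Hom_\Lambda(-,Q)=0$. Your proposal rebuilds that proposition from scratch, and in doing so two steps go wrong.

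In part~(2) you assert that removing the summand $Q$ from the $\tau$-tilting module $Q\oplus\overline Q\oplus U$ leaves a support $\tau$-tilting module, justifying this by ``mutation at $Q$ together with \cite[Theorem~2.18]{AIR}''. That is not what mutation says: an almost complete $\tau$-rigid pair has two completions, but neither completion need be the pair $(\overline Q\oplus U,0)$; equivalently, deleting a summand from a $\tau$-tilting module typically gives only an almost complete support $\tau$-tilting module, not a support $\tau$-tilting one. (For $\Lambda=k(1\to 2)$, deleting $P_2$ from $\Lambda$ leaves $P_1$, which is not support $\tau$-tilting.) What makes it true here is precisely the special feature of the projective-injective $Q$ exploited in Adachi's Proposition~3.7, which you would have to reprove. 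Invoking the poset isomorphism $\H(\Lambda)\cong\H(\overline\Lambda)^{\mathcal N_1}$ afterwards is legitimate but is the full Theorem~3.3 of \cite{Adachi2016}, so you are using a much stronger statement than the lemma you are proving.

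In part~(3) your claim $\Fac(Q\oplus U)=\Fac(\overline Q\oplus U)$ is false: since $Q$ is projective, $Q\in\Fac(\overline Q\oplus U)$ would force $Q\in\add(\overline Q\oplus U)$, which is excluded by hypothesis; so the two torsion classes never coincide. You also identify ``$\Soc Q\in\Fac(\overline Q\oplus U)$'' with ``$\Hom_\Lambda(\overline Q\oplus U,Q)\ne 0$'', but the latter only says that $\Soc Q$ is a composition factor of $\overline Q\oplus U$, not a quotient. The equivalence in (3) is again a direct consequence of Adachi's Proposition~3.7 combined with the rank count; the $\Fac$ heuristic does not establish it.
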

\begin{proof}
Note that a support $\tau$-tilting $\Lambda$-module $M$ is $\tau$-tilting if and only if $|M|=|\Lambda|$. Hence (1), (2) and (3) follow from \cite[Proposition 3.7]{Adachi2016}.
\end{proof}

\begin{lemma} \label{2.2} Assume $Q$ is not simple. We have
$$\{M\in\tilt \Lambda\mid Q\notin\add M,\overline Q\in \add M\}=\emptyset.$$
\end{lemma}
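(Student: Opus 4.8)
The plan is to argue by contradiction, working with the torsion theory attached to a hypothetical $M$ rather than trying to enlarge it to a bigger $\tau$-rigid module. The naive approach --- showing that $Q\oplus M$ is $\tau$-rigid --- is not available: since $Q$ is injective one only gets $\Ext^1_\Lambda(M,Q)=0$, hence $\overline{\Hom}_\Lambda(Q,\tau M)=0$ by the Auslander--Reiten formula, and because $Q$ is itself injective this is vacuous and carries no information about $\Hom_\Lambda(Q,\tau M)$. Instead I would show that a hypothetical $M$ as in the statement must have $Q\in\Fac M$; as $Q$ is injective it is automatically Ext-projective in the torsion class $\Fac M$, so it would have to be a summand of $M$, which is the contradiction.

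So suppose $M\in\tilt \Lambda$ with $\overline Q\in\add M$ but $Q\notin\add M$, and put $S:=\Soc Q$. I would use two facts from \cite{AIR}: first, $M$ is sincere, since a $\tau$-tilting module is sincere (if $e_iM=0$ then $M$ would be a $\tau$-rigid module over $\Lambda/\Lambda e_i\Lambda$ with $|\Lambda|>|\Lambda/\Lambda e_i\Lambda|$ non-isomorphic indecomposable summands, which is impossible); second, $\mathcal T:=\Fac M$ is a functorially finite torsion class whose indecomposable Ext-projective objects are precisely the indecomposable summands of $M$, and whose torsion-free class is $\mathcal F=\{X\in\mod\Lambda\mid\Hom_\Lambda(M,X)=0\}$. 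Now $Q$ is an indecomposable injective module, so $\Ext^1_\Lambda(Q,-)=0$; hence if $Q\in\mathcal T$ then $Q$ is an indecomposable Ext-projective of $\mathcal T$, forcing $Q\in\add M$, contrary to hypothesis. Therefore $Q\notin\mathcal T$, so the torsion submodule $tQ$ of $Q$ is a proper submodule. It is also nonzero: $tQ=0$ would give $Q\in\mathcal F$, i.e. $\Hom_\Lambda(M,Q)=0$; but $Q$ is the injective envelope of the simple module $S$, so $\dim_K\Hom_\Lambda(M,Q)=[M:S]>0$ by sincerity, a contradiction.

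For the last step, since $Q$ is an indecomposable injective module its socle $S$ is simple and is contained in every nonzero submodule of $Q$; in particular $S\subseteq tQ$, so $Q/tQ$ is a quotient of $Q/S=\overline Q$. As $\overline Q\in\add M\subseteq\mathcal T$ and a torsion class is closed under quotients, $Q/tQ\in\mathcal T$; on the other hand $Q/tQ\in\mathcal F$ by definition of $tQ$, so $Q/tQ\in\mathcal T\cap\mathcal F=0$, i.e. $tQ=Q$, contradicting $tQ\subsetneq Q$. This yields $\{M\in\tilt\Lambda\mid Q\notin\add M,\ \overline Q\in\add M\}=\emptyset$. The only ingredient requiring an external citation is the sincerity of $\tau$-tilting modules; the remaining obstacle is conceptual rather than technical, namely recognising that one should pass to the torsion pair $(\Fac M,\mathcal F)$ and combine the injectivity of $Q$ (which makes it Ext-projective for free) with the simple observation that an indecomposable injective module meets each of its nonzero submodules in its socle. (If one prefers to avoid invoking sincerity, note instead that $\Hom_\Lambda(\overline Q,Q)\cong\rad\End_\Lambda(Q)$, which is nonzero unless $Q$ is a brick, and dispatch the brick case separately; but the uniform argument above is cleaner.)
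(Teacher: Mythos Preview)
Your proof is correct and takes a genuinely different route from the paper. The paper writes $M=\overline Q\oplus X$, observes that $Q\notin\Fac X$ because $Q$ is projective, and then invokes \cite[Proposition~3.7]{Adachi2016} as a black box to conclude that $Q\oplus\overline Q\oplus X$ is support $\tau$-tilting; this module would have $|\Lambda|+1$ indecomposable summands, which is impossible. Your argument instead stays inside the torsion pair $(\Fac M,M^{\perp})$: sincerity forces $tQ\neq 0$, the simple socle of $Q$ then sits inside $tQ$, so $Q/tQ$ is a quotient of $\overline Q\in\Fac M$, whence $Q/tQ=0$ and $Q\in\Fac M$; injectivity of $Q$ makes it Ext-projective in $\Fac M$, hence a summand of $M$.

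What each approach buys: the paper's proof is shorter once Adachi's Proposition~3.7 is granted, but it imports a nontrivial external statement. Your proof is self-contained modulo standard facts from \cite{AIR} (sincerity of $\tau$-tilting modules and $P(\Fac M)=M$) and makes transparent \emph{why} the projective--injective $Q$ cannot be omitted: it is forced into $\Fac M$ by the presence of $\overline Q$ and the simplicity of $\Soc Q$, and then its injectivity pins it down as a summand. Your opening remark that the na\"ive attempt to prove $\Hom_\Lambda(Q,\tau M)=0$ fails is also accurate and worth keeping, as it explains why the detour through $\Fac M$ is the right move.
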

\begin{proof} Let $M\in \{M\in\tilt \Lambda\mid Q\notin\add M,\overline Q\in \add M\}$. Write $M=\overline Q\oplus X$ where $ X$ does not have $Q\oplus\overline Q$ as a direct summand, we have $Q\notin\Fac X$ since $Q$ is projective. By \cite[Proposition 3.7]{Adachi2016}, $Q\oplus\overline Q\oplus X\in\stilt \Lambda$ and  $|Q\oplus\overline Q\oplus X|=|\Lambda|+1$ which implies $Q\in\Fac X$. This is a contradiction.
\end{proof}

Now, we decompose $\tilt \Lambda$ as the following three parts.
$$\mathcal{M}_1:=\{M\in\tilt \Lambda\mid Q,\overline Q\notin \add M\},$$
$$\mathcal{M}_2:=\{M\in\tilt \Lambda\mid Q,\overline Q\in \add M\},$$
$$\mathcal{M}_3:=\{M\in\tilt \Lambda\mid Q\in\add M,\overline Q\notin \add M\}.$$

If $X\in\tilt\overline \Lambda$, then $X$ is sincere by \cite[Propositiin 2.2(a)]{AIR}. Hence $X$ is sincere $\Lambda$-module. Thus  $\Hom_\Lambda(N,Q)\ne 0$ since $Q$ is indecomposable injective. Therefore, $\{N\in\tilt\overline \Lambda\mid \overline Q\in\add N, \Hom_\Lambda(N,Q)= 0\}=\emptyset$. The following proposition can be obtained by Lemma \ref{2.1} immediately.
\begin{proposition} Assume that $Q$ is not simple. Then there are bijections
$$\mathcal{M}_1\to\mathcal{N}_1,~~\mathcal{M}_2\to\mathcal{N}_2,~~\mathcal{M}_3\to\mathcal{N}_3$$ given by $M\to \alpha (M)$ where
$$\mathcal{N}_1:=\{N\in\tilt\overline \Lambda\mid \overline Q\notin\add N\},$$
$$\mathcal{N}_2:=\mathcal{N}=\{N\in\stilt\overline \Lambda\mid \overline Q\in\add N, \Hom_\Lambda(N,Q)=0~\text{and}~|N|=|\overline \Lambda|-1\},$$
$$\mathcal{N}_3=\{N\in\tilt\overline \Lambda\mid \overline Q\in\add N, \Hom_\Lambda(N,Q)\ne 0\}.$$
In particular, there is a bijection 
$$\alpha: \tilt \Lambda\to \tilt\overline \Lambda \coprod \mathcal{N}.$$
\end{proposition}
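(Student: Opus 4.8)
The plan is to prove the three block bijections $\mathcal{M}_i\to\mathcal{N}_i$ separately and then glue them, after checking that $\tilt\Lambda$ and $\tilt\overline\Lambda\coprod\mathcal{N}$ are partitioned into three matching blocks. First I would record the two partitions. On the source side, $\mathcal{M}_1,\mathcal{M}_2,\mathcal{M}_3$ are pairwise disjoint by definition, and they exhaust $\tilt\Lambda$ because the only case for the pair of conditions ``$Q\in\add M$'' / ``$\overline Q\in\add M$'' not covered by them, namely $Q\notin\add M$ and $\overline Q\in\add M$, is ruled out by Lemma \ref{2.2}. On the target side, $\mathcal{N}_1,\mathcal{N}_3\subseteq\tilt\overline\Lambda$ are disjoint (one has $\overline Q\notin\add N$, the other $\overline Q\in\add N$) and $\mathcal{N}_2=\mathcal{N}$ is disjoint from $\tilt\overline\Lambda$ because its members have $|N|=|\overline\Lambda|-1$; moreover $\mathcal{N}_1\cup\mathcal{N}_3=\tilt\overline\Lambda$ since every $N\in\tilt\overline\Lambda$ is sincere by \cite[Proposition 2.2]{AIR}, hence sincere over $\Lambda$ (as $\Soc(Q)\subseteq\rad\Lambda$, so $\Lambda$ and $\overline\Lambda$ have the same simple modules), hence $\Hom_\Lambda(N,Q)\ne0$ because $Q$ is the injective envelope of the simple $\Soc(Q)$ --- so the last condition in the definition of $\mathcal{N}_3$ is automatic. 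Thus $\mathcal{N}_1\coprod\mathcal{N}_2\coprod\mathcal{N}_3=\tilt\overline\Lambda\coprod\mathcal{N}$.

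Next I would check that $\alpha$ sends $\mathcal{M}_i$ into $\mathcal{N}_i$ bijectively, using Lemma \ref{2.1}. For $i=1$: if $M\in\mathcal{M}_1$ then $Q\notin\add M$, so by \cite[Proposition 3.1(2)]{Adachi2016} every indecomposable summand $X$ of $M$ satisfies $\overline X\cong X$; hence $M$ is an $\overline\Lambda$-module, $\alpha(M)=M$, and by Lemma \ref{2.1}(1) together with $\overline Q\notin\add M$ we get $\alpha(M)\in\mathcal{N}_1$. Conversely an $N\in\mathcal{N}_1$ regarded over $\Lambda$ has no summand $Q$ (writing $Q\cong e\Lambda$, one has $Q\cdot\Soc(Q)\supseteq e\cdot\Soc(Q)=\Soc(Q)\ne0$, so $Q$ is not an $\overline\Lambda$-module), no summand $\overline Q$, and lies in $\tilt\Lambda$ by Lemma \ref{2.1}(1); thus $N\in\mathcal{M}_1$ and $\alpha(N)=N$. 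For $i=2$: write $M=Q\oplus\overline Q\oplus U\in\mathcal{M}_2$ with $U$ basic and having neither $Q$ nor $\overline Q$ as a summand; then $\alpha(M)=\overline Q\oplus U$ (since $\overline{\overline Q}=\overline Q$ and $\overline U=U$) with $|\overline Q\oplus U|=|\Lambda|-1=|\overline\Lambda|-1$, and Lemma \ref{2.1}(2) says precisely $\alpha(M)\in\mathcal{N}_2$; conversely $N=\overline Q\oplus U\in\mathcal{N}_2$ lifts to $Q\oplus\overline Q\oplus U\in\mathcal{M}_2$ (basic since $Q\not\cong\overline Q$, $Q$ being non-simple) by the same lemma. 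For $i=3$: write $M=Q\oplus U\in\mathcal{M}_3$ with $U$ free of $Q$ and $\overline Q$; then $\alpha(M)=\overline Q\oplus U$, and Lemma \ref{2.1}(3) --- the condition $\Hom_\Lambda(\overline Q\oplus U,Q)\ne0$ being automatic by the sincerity argument above --- yields $\alpha(M)\in\mathcal{N}_3$, the inverse being $N=\overline Q\oplus U\mapsto Q\oplus U$, which lies in $\tilt\Lambda$ again by Lemma \ref{2.1}(3). In each block $\alpha$ is injective because $\alpha(M)$ recovers $\add\overline M$, and knowing which of $Q,\overline Q$ lie in $\add M$ then recovers $M$ up to isomorphism, the displayed lifts being the two-sided inverses.

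Gluing the three blocks gives the bijection $\alpha\colon\tilt\Lambda=\mathcal{M}_1\coprod\mathcal{M}_2\coprod\mathcal{M}_3\to\mathcal{N}_1\coprod\mathcal{N}_2\coprod\mathcal{N}_3=\tilt\overline\Lambda\coprod\mathcal{N}$, which is the assertion. The only genuine input beyond bookkeeping is Lemma \ref{2.1}, which may be assumed; the step I expect to need the most care is the $i=3$ block, where one must keep straight that a module of the form $\overline Q\oplus U$ can be $\tau$-tilting over $\overline\Lambda$ without being $\tau$-rigid over $\Lambda$, so that the equivalence provided by Lemma \ref{2.1}(3) really is the bridge between ``$Q\oplus U\in\tilt\Lambda$'' and ``$\overline Q\oplus U\in\tilt\overline\Lambda$'', and one must verify that $N\mapsto Q\oplus U$ is a well-defined two-sided inverse of $\alpha|_{\mathcal{M}_3}$.
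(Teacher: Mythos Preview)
Your proposal is correct and follows exactly the paper's approach: the paper records the sincerity observation that $\{N\in\tilt\overline\Lambda\mid\overline Q\in\add N,\ \Hom_\Lambda(N,Q)=0\}=\emptyset$ in the paragraph preceding the proposition and then asserts that the result ``can be obtained by Lemma~\ref{2.1} immediately,'' and you have written out precisely the block-by-block verification this entails, including the partition check via Lemma~\ref{2.2}. Your reading of Lemma~\ref{2.1}(3) as a bridge between $Q\oplus U\in\tilt\Lambda$ and $\overline Q\oplus U\in\tilt\overline\Lambda$ is the intended one (the ``$\tilt\Lambda$'' on the right-hand side of Lemma~\ref{2.1}(3) as printed appears to be a typo for $\tilt\overline\Lambda$, consistent with \cite[Proposition~3.7]{Adachi2016} and with Lemma~\ref{2.2}, which would otherwise force $\mathcal{M}_3=\emptyset$).
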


\begin{corollary} We have
$$\tilt \Lambda=\{N\mid N\in \mathcal{N}_1\}\coprod\{Q\oplus N\mid N\in\mathcal{N}_2\}\coprod \{Q\oplus(N/\overline Q)\mid N\in \mathcal{N}_3\}.$$

\end{corollary}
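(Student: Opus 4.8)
The plan is to obtain the statement as a direct reformulation of the preceding Proposition, by spelling out what the bijection $\alpha$ does on each of the three pieces into which $\tilt\Lambda$ naturally breaks. First I would record that the sets $\mathcal{M}_1,\mathcal{M}_2,\mathcal{M}_3$ are pairwise disjoint, since they are distinguished by whether or not $Q$ and $\overline Q$ belong to $\add M$, and that together with $\{M\in\tilt\Lambda\mid Q\notin\add M,\ \overline Q\in\add M\}$ --- which is empty by Lemma \ref{2.2} --- they exhaust $\tilt\Lambda$. Thus $\tilt\Lambda=\mathcal{M}_1\coprod\mathcal{M}_2\coprod\mathcal{M}_3$.

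Next, for a module $M$ in each of these pieces I would exhibit a basic module with $\add$ equal to $\add\overline M$, using that $\overline{\overline Q}=\overline Q$ and that $\overline N\cong N$ for every indecomposable $\Lambda$-module $N$ not isomorphic to $Q$. For $M\in\mathcal{M}_1$ no indecomposable summand of $M$ is $Q$, so $\overline M\cong M$, hence $\alpha(M)=M$ and (viewing an $\overline\Lambda$-module as a $\Lambda$-module via $\Lambda\twoheadrightarrow\overline\Lambda$) the bijection $\mathcal{M}_1\to\mathcal{N}_1$ is just the identity, giving $\mathcal{M}_1=\{N\mid N\in\mathcal{N}_1\}$. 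For $M\in\mathcal{M}_2$, writing $M=Q\oplus\overline Q\oplus U$ with $U$ having neither $Q$ nor $\overline Q$ as a summand, we get $\overline M\cong\overline Q\oplus\overline Q\oplus U$, so $\alpha(M)=\overline Q\oplus U$ and therefore $M=Q\oplus\alpha(M)$; thus $\mathcal{M}_2=\{Q\oplus N\mid N\in\mathcal{N}_2\}$. For $M\in\mathcal{M}_3$, writing $M=Q\oplus U$ with $U$ having neither $Q$ nor $\overline Q$ as a summand, we get $\overline M\cong\overline Q\oplus U$, so $\alpha(M)=\overline Q\oplus U\in\mathcal{N}_3$ and $U=\alpha(M)/\overline Q$; thus $\mathcal{M}_3=\{Q\oplus(N/\overline Q)\mid N\in\mathcal{N}_3\}$.

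Finally I would substitute these three identifications into $\tilt\Lambda=\mathcal{M}_1\coprod\mathcal{M}_2\coprod\mathcal{M}_3$, which yields the claimed decomposition. The only point that takes a moment of care is the bookkeeping with $\overline{(-)}$ on direct summands: that $\overline Q$ appears in $\overline M$ with multiplicity at least $2$ when $M\in\mathcal{M}_2$ (so that the basic module $\alpha(M)$ contains exactly one copy of $\overline Q$), and that $\overline Q\notin\add U$ in the $\mathcal{M}_3$ case, which is automatic from the definition of $\mathcal{M}_3$ because $\overline U\cong U$. Beyond this there is no real obstacle --- all of the substance is already contained in the Proposition (hence in Lemma \ref{2.1} and \cite[Proposition 3.7]{Adachi2016}).
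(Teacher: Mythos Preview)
Your proposal is correct and is exactly the argument the paper has in mind: the Corollary is stated in the paper without proof, as an immediate consequence of the preceding Proposition, and your write-up simply makes explicit the inverse of $\alpha$ on each piece $\mathcal{M}_i$. The bookkeeping you describe (decomposing $\tilt\Lambda$ via Lemma~\ref{2.2} and computing $\alpha$ summand-wise) is precisely what is implicit in the statement.
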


Now, we are ready to prove Theorem \ref{1}.

{\bf Proof of Theorem \ref{1}}. (1) It is clearly since $\tilt \Lambda=\{Q\oplus M\mid M\in\tilt \overline \Lambda\}$ where $Q$ is a simple projective-injective $\Lambda$-module.

(2)  By Proposition \ref{2.2}, we have a bijection$$\alpha: \tilt \Lambda\to \tilt\overline \Lambda \coprod \mathcal{N}.$$
We only need to show that, for any $M,L\in\tilt \Lambda$, $M\geqslant L$ if and only if $\alpha(M)\geqslant \alpha(L)$. In fact, if  $M\geqslant L$, then $L\in\Fac M$ and we have $\overline L\in \Fac \overline M$ which implies $\alpha(M)\geqslant \alpha(L)$.
Conversely, let $\alpha(M)\geqslant \alpha(L)$. If both $M$ and $L$ are in  $\mathcal{M}_i (i=1,2,3)$, then it is clear that $M\geqslant L$.  Otherwise,

{\bf Case 1}: If $L\in \mathcal{M}_1$, then $\alpha(L)=L$. Note that $\overline M\in \Fac M$, we have $M\geqslant \alpha(M)\geqslant \alpha(L)=L$.

{\bf Case 2}: If $L\in \mathcal{M}_2$, then $M\in  \mathcal{M}_3$ since  $\mathcal{N}_1$ has no $\overline Q$ as a direct summand. Thus $\alpha(L)=Q\oplus \overline L\in \Fac M$ because $\mathcal{M}_3$ has  $Q$ as a direct summand.

{\bf Case 3}: If $L\in \mathcal{M}_3$, then $M\notin  \mathcal{M}_1$ since  $\mathcal{N}_1$ has no $\overline Q$ as a direct summand. Assume  $M\in  \mathcal{M}_2$. Then $\alpha(M)$ has no $\Soc (Q)$ as a composition factor and $\alpha(L)$ has  $\Soc (Q)$ as a composition factor. This is a contradiction with $\alpha(L)\in \Fac\alpha(M)$.

Thus the assertion follows.

We illustrate Theorem \ref{1} with the following example.

\begin{example}\label{4.1}
{\rm Let $\Lambda$ be a finite dimensional $K$-algebra given by the quiver
$1 \stackrel{\alpha}{\longrightarrow} 2 \stackrel{\beta}{\longrightarrow} 3$ with the relation $\alpha\beta=0$. Take $Q=P_1$. Then $Q$ is an indecomposable projective-injective module. The algebra $\overline \Lambda$ given by the quiver
$1 ~~~~~~ 2 \stackrel{\beta}{\longrightarrow} 3.$

We draw the Hasse quivers $\H(\Lambda)$ and $\H( \overline \Lambda)$ as follows, 
\[\xymatrix{ {\smallmatrix \H(\Lambda):
\endsmallmatrix} &{\smallmatrix 1\\2
\endsmallmatrix}{\smallmatrix 2\\3
\endsmallmatrix}{\smallmatrix 2
\endsmallmatrix}\ar[r]\ar[rrd]&{\smallmatrix 1\\2
\endsmallmatrix}{\smallmatrix 2
\endsmallmatrix}\ar[r]\ar[rrd]&{\smallmatrix 1\\2
\endsmallmatrix}{\smallmatrix 1
\endsmallmatrix}\ar[r]&{\smallmatrix 1
\endsmallmatrix}\ar[rd]&\\
{\smallmatrix 1\\2
\endsmallmatrix}{\smallmatrix 2\\3
\endsmallmatrix}{\smallmatrix 3
\endsmallmatrix}\ar@{~>}[r]\ar@{~>}[ur]\ar[rd]&{\smallmatrix 1\\2
\endsmallmatrix}{\smallmatrix 1
\endsmallmatrix}{\smallmatrix 3
\endsmallmatrix}\ar[r]\ar[rru]&{\smallmatrix 1
\endsmallmatrix}{\smallmatrix 3
\endsmallmatrix}\ar[rru]\ar[rrd]&{\smallmatrix 2\\3
\endsmallmatrix}{\smallmatrix 2
\endsmallmatrix}\ar[r]&{\smallmatrix 2
\endsmallmatrix}\ar[r]&{\smallmatrix 0
\endsmallmatrix}\\
&{\smallmatrix 2\\3
\endsmallmatrix}{\smallmatrix 3
\endsmallmatrix}\ar[rrr]\ar[rru]&&&{\smallmatrix 3
\endsmallmatrix}\ar[ru]&
}\]

\[\xymatrix{ {\smallmatrix \H(\overline\Lambda):
\endsmallmatrix} &{\smallmatrix 1
\endsmallmatrix}{\smallmatrix 2\\3
\endsmallmatrix}{\smallmatrix 2
\endsmallmatrix}\ar[r]\ar[rrd]&{\smallmatrix 1
\endsmallmatrix}{\smallmatrix 2
\endsmallmatrix}\ar[rr]\ar[rrd]&&{\smallmatrix 1
\endsmallmatrix}\ar[rd]&\\
{\smallmatrix 1
\endsmallmatrix}{\smallmatrix 2\\3
\endsmallmatrix}{\smallmatrix 3
\endsmallmatrix}\ar@{~>}[rr]\ar@{~>}[ur]\ar[rd]&&{\smallmatrix \color{red}{1}
\endsmallmatrix}{\smallmatrix \color{red}{3}
\endsmallmatrix}\ar[rru]\ar[rrd]&{\smallmatrix 2\\3
\endsmallmatrix}{\smallmatrix 2
\endsmallmatrix}\ar[r]&{\smallmatrix 2
\endsmallmatrix}\ar[r]&{\smallmatrix 0
\endsmallmatrix}\\
&{\smallmatrix 2\\3
\endsmallmatrix}{\smallmatrix 3
\endsmallmatrix}\ar[rrr]\ar[rru]&&&{\smallmatrix 3,
\endsmallmatrix}\ar[ru]&
}\]

We draw those arrows in  $Q(\tilt \Lambda)$ and $Q(\tilt \overline \Lambda\coprod \mathcal{N})$
by\xymatrix{\ar@{~>}[r] &} and  $\mathcal{N}$ is marked by red.}
\end{example}

Considering  the following quivers.
 $$\xymatrix@!@R=5pt@C=5pt{
A_n:&n\ar[r]&n-1\ar[r]&\cdots\ar[r]&2\ar[r]&1}$$

$$\xymatrix@R=5PT@C=15PT{&&&&1\\
D_n:~~~n\ar[r]&n-1\ar[r]&\cdots\ar[r]&3\ar[ru]\ar[rd]&\\
&&&&2}$$

Take $A_n^2:=KA_n/\rad^2$  and $D_n^2:=KD_n/\rad^2$.  Applying our results, we can give   a recurrence relation about the numbers of $\tau$-tilting modules over $A_n^2$ and $D_n^2$.

\begin{theorem} Let $\Lambda^2_n$  be an algebra ($A_n^2$ or $D_n^2$). Then we have 
 $$|\tilt \Lambda^2_n|=|\tilt \Lambda^2_{n-1}|+|\tilt \Lambda^2_{n-2}|.$$
\end{theorem}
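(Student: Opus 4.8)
The plan is to apply Theorem~\ref{1} repeatedly to the algebras $\Lambda^2_n$ and identify the quiver $Q(\tilt \overline{\Lambda^2_n})$ with a disjoint union of $\tau$-tilting quivers of smaller algebras of the same family, so that taking cardinalities yields the Fibonacci-type recurrence.

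First I would locate an indecomposable projective-injective module $Q$ in $\Lambda^2_n$. For $A^2_n = KA_n/\rad^2$ the vertex $1$ is a sink and vertex $2$ has a single arrow into it, so $P_1 = \left(\begin{smallmatrix}2\\1\end{smallmatrix}\right)$ has simple top $S_2$-fed socle... more precisely $P_1$ has composition factors $S_2$ (top), $S_1$ (socle), and since $\rad^2 = 0$ this is the injective envelope of $S_1$; hence $Q := P_1$ is projective-injective and indecomposable, with $\Soc(Q) = S_1$. The key structural observation is then that $\overline{\Lambda^2_n} = \Lambda^2_n/\Soc(Q)$ is the algebra obtained by deleting vertex $1$ together with the arrow $2 \to 1$, which is precisely $A^2_{n-1}$ with an extra isolated vertex. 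Concretely $\overline{A^2_n} \cong A^2_{n-1} \times K$, where the $K$-factor corresponds to the now-isolated vertex $1$. The analogous computation for $D^2_n$: vertices $1$ and $2$ are sinks, vertex $3$ maps to both, so $P_1 = \left(\begin{smallmatrix}3\\1\end{smallmatrix}\right)$ is indecomposable projective-injective with $\Soc(P_1) = S_1$, and $\overline{D^2_n}$ is $D^2_n$ with vertex $1$ and the arrow $3 \to 1$ removed, i.e.\ $A^2_{n-1} \times K$ again (the remaining quiver $n \to \cdots \to 3 \to 2$ is a linear $A_{n-1}$ with $\rad^2=0$, plus the isolated vertex $1$).

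Next I would check which case of Theorem~\ref{1} applies. Since $Q$ here has Loewy length $2$ with $Q/\Soc(Q) \cong S_2$ simple (respectively $S_3$ for $D_n$), the module $Q/\Soc(Q)$ does \emph{not} have $\Soc(Q) = S_1$ as a composition factor, so we are in the setting of Theorem~\ref{1}(2) but with $\mathcal{N}$ possibly nonempty; however $Q$ is not simple, so we get an isomorphism $Q(\tilt \Lambda^2_n) \to Q(\tilt \overline{\Lambda^2_n} \coprod \mathcal{N})$ where $\mathcal{N} = \{N \in \stilt\overline{\Lambda^2_n} \mid \overline Q \in \add N,\ \Hom_\Lambda(N,Q) = 0,\ |N| = |\overline\Lambda| - 1\}$. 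I claim $\mathcal{N} = \emptyset$ here: any $N \in \stilt\overline{\Lambda^2_n}$ containing $\overline Q = S_2$ as a summand and with $|N| = |\overline\Lambda^2_n| - 1 = n-1$ must, because the isolated vertex $1$ is not in the support of any indecomposable summand unless $S_1$ itself is a summand, satisfy $N = S_2 \oplus (\text{a } \tau\text{-tilting module over } A^2_{n-1})$ — but then $N$ is a $\tau$-tilting $\overline{\Lambda^2_n}/(e_1)$-module; one then argues as in the paragraph before the Proposition that such an $N$ is sincere over the connected component $A^2_{n-1}$, hence $\Hom_\Lambda(N,Q) \neq 0$ since $Q$ restricted there is injective with socle $S_2$. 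Alternatively, and more cleanly: a basic $\tau$-tilting module over $A^2_{n-1} \times K$ is $T \oplus S_1$ with $T \in \tilt A^2_{n-1}$, so $|N| = n - 1$ forces $N$ to omit exactly one summand from a full $\tau$-tilting module, and the constraint $S_1 \notin \add N$, $\Hom_\Lambda(N,Q) = 0$ cannot simultaneously hold — I would verify this directly from the (known) list of $\tau$-tilting modules of a $\rad^2=0$ linear $A$-type algebra. Granting $\mathcal{N} = \emptyset$, Theorem~\ref{1}(2) gives $Q(\tilt \Lambda^2_n) \cong Q(\tilt \overline{\Lambda^2_n})$.

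Finally I would decompose $Q(\tilt \overline{\Lambda^2_n})$. Since $\overline{\Lambda^2_n} = A^2_{n-1} \times K$ and $\tilt(\Gamma_1 \times \Gamma_2) \cong \tilt\Gamma_1 \times \tilt\Gamma_2$ with $|\tilt K| = 2$ (namely $K$ and $0$), we have $|\tilt\overline{\Lambda^2_n}| = 2\,|\tilt A^2_{n-1}|$. But this by itself would give a factor $2$, not the Fibonacci recurrence, so the decomposition must be done more carefully: the point is that $\Lambda^2_{n-1}$ \emph{itself} has an indecomposable projective-injective module (vertex $1$ again), and I should instead identify $\overline{\Lambda^2_n}$ not as $A^2_{n-1}\times K$ but recognize that iterating the reduction relates $\tilt\Lambda^2_n$ to a union indexed by whether or not the "new" projective-injective summand $Q$ appears. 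Concretely, from the Corollary, $\tilt\Lambda^2_n = \mathcal{M}_1 \sqcup \mathcal{M}_2 \sqcup \mathcal{M}_3$; modules in $\mathcal{M}_2 \sqcup \mathcal{M}_3$ are exactly $Q \oplus (\text{stuff over } \overline\Lambda)$ and biject with $\tilt\overline\Lambda$ restricted to modules containing $\overline Q$ — since over $A^2_{n-1}\times K$ having $\overline Q = S_2$ as a summand and containing or not the isolated $S_1$ splits these, and the ones with $S_1$ correspond to $\tilt A^2_{n-1}$ while the delicate ones reduce to $\tilt A^2_{n-2}$ (removing vertex $2$ from $A^2_{n-1}$ and its arrow, giving $A^2_{n-2}$ plus an isolated vertex whose presence is now forced). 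I expect the main obstacle to be precisely this bookkeeping: showing that exactly one of the two "isolated-vertex choices" at each stage survives the constraints in the definition of $\mathcal{N}_1, \mathcal{N}_2, \mathcal{N}_3$, so that the count splits as $|\tilt\Lambda^2_n| = (\text{those not involving the top arrow}) + (\text{those involving it}) = |\tilt\Lambda^2_{n-1}| + |\tilt\Lambda^2_{n-2}|$ rather than collapsing to a trivial doubling. Once the bijections $\mathcal{M}_1 \leftrightarrow \tilt\Lambda^2_{n-1}$ (the "vertex $1$ absent" part) and $\mathcal{M}_2\sqcup\mathcal{M}_3 \leftrightarrow \tilt\Lambda^2_{n-2}$ (the "vertex $1$ present, hence vertex $2$ forced" part) are established by this two-step reduction, summing cardinalities finishes the proof, and the small base cases $|\tilt\Lambda^2_1| = 2$, $|\tilt\Lambda^2_2| = 4$ (or $5$ for $D$, with an appropriate interpretation of $D^2_2$) are checked by hand.
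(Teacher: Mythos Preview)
Your proposal contains a genuine gap: the claim that $\mathcal{N}=\emptyset$ is false, and in fact $\mathcal{N}$ is exactly where the second Fibonacci term comes from. With your choice of $Q$ (which, incidentally, is $P_2=I_1$ in the paper's conventions, not $P_1$; vertex $1$ is a sink so $P_1=S_1$ is simple), the socle of $Q$ is $S_1$, so $\Hom_\Lambda(N,Q)=\Hom_\Lambda(N,I_1)=0$ precisely when $N$ has \emph{no} composition factor $S_1$. An $N\in\stilt\overline\Lambda$ with $|N|=n-1$, $\overline Q=S_2\in\add N$, and no $S_1$ is then a $\tau$-tilting module over the connected component on vertices $2,\dots,n$ containing the simple projective $S_2$ as a summand; such modules certainly exist (e.g.\ $\overline\Lambda$ itself), so $\mathcal{N}\neq\emptyset$. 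Your sentence ``$Q$ restricted there is injective with socle $S_2$'' is the slip: $Q$ still has socle $S_1$, and $N$ being sincere on vertices $2,\dots,n$ gives $\Hom_\Lambda(N,Q)=0$, not $\neq 0$.

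The second error compounds the first: $|\tilt K|=1$, not $2$. The zero module is support $\tau$-tilting but not $\tau$-tilting, since $|0|=0\neq 1=|K|$. Hence $|\tilt(\Lambda^2_{n-1}\times K)|=|\tilt\Lambda^2_{n-1}|$, with no doubling. The paper's proof takes $Q=P_n$ (the other end of the quiver), so that $\overline Q=S_n$ is the isolated vertex of $\overline{\Lambda^2_n}\cong\Lambda^2_{n-1}\times K$. Then the conditions defining $\mathcal{N}$ unwind cleanly: $S_n\in\add N$ forces $N=S_n\oplus L$, the condition $\Hom_\Lambda(N,P_n)=0$ becomes $\Hom_{\Lambda^2_{n-1}}(L,S_{n-1})=0$, i.e.\ $L$ is supported on vertices $1,\dots,n-2$, and $|L|=n-2$ then says $L\in\tilt\Lambda^2_{n-2}$. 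So $|\mathcal{N}|=|\tilt\Lambda^2_{n-2}|$, and Theorem~\ref{1}(2) gives $|\tilt\Lambda^2_n|=|\tilt\Lambda^2_{n-1}|+|\tilt\Lambda^2_{n-2}|$ in one step, with no two-stage reduction needed. (Your base cases are also off: $|\tilt A^2_1|=1$ and $|\tilt A^2_2|=2$.)
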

\begin{proof} Take  $Q=P_n$ which is an indecomposable projective-injective $\Lambda^2_n$-module. Since $\Soc (Q)\cong S_{n-1}$, we have $\overline{\Lambda^2_n}=\Lambda^2_n/S_{n-1}\cong \Lambda^2_{n-1}\times K$ and $Q/S_{n-1}\cong S_n$. Hence
\begin{equation*}
\begin{split}
\mathcal{N}&=\{N\in \stilt \overline{\Lambda^2_n}\mid Q/\Soc(Q)\in \add N,\Hom_{\Lambda^2_n}(N,Q)=0 ~\text{and} ~|N|=n-1\}\\
&=\{N\in \stilt (\Lambda^2_{n-1}\times K)\mid S_n\in \add N,\Hom_{\Lambda^2_n}(N,P_n)=0 ~\text{and} ~|N|=n-1 \}\\
&=\{S_n\oplus L\mid L\in \silt \Lambda^2_{n-1}, \Hom_{\Lambda^2_n}(L,P_n)=0 ~\text{and} ~|L|=n-2 \}\\
&=\{S_n\oplus L\mid L\in \silt \Lambda^2_{n-1},\Hom_{\Lambda^2_{n-1}}(L,S_{n-1})=0 ~\text{and} ~|L|=n-2 \}\\
&=\{S_n\oplus L\mid L\in \silt \Lambda^2_{n-2}~\text{and} ~|L|=n-2 \}\\
&=\{S_n\oplus L\mid L\in \tilt \Lambda^2_{n-2} \}.
\end{split}
\end{equation*}
By Theorem \ref{1}, there is a bijection $Q(\tilt \Lambda^2_n) \to Q(\tilt({\Lambda^2_{n-1}\times K}) \coprod \mathcal{N})$. Thus
$$|\tilt \Lambda^2_n|=|\tilt (\Lambda^2_{n-1}\times K)|+| \mathcal{N}|=|\tilt \Lambda^2_{n-1}|+|\tilt \Lambda^2_{n-2}|.$$
\end{proof}

\begin{corollary}~
\begin{enumerate}
\item[(1)]  $|\tilt A^2_n|=\frac{(1+\sqrt{5})^{n+1}-(1-\sqrt{5})^{n+1}}{\sqrt{5}\cdot 2^{n+1}}.$
\item[(2)]  $|\tilt D^2_n|=\frac{(2\sqrt{5}-1)(1+\sqrt{5})^{n-1}+(2\sqrt{5}+1)(1-\sqrt{5})^{n-1}}{\sqrt{5}\cdot 2^{n-1}}.$
\end{enumerate}
\end{corollary}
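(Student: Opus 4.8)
The plan is to read off closed forms from the recurrence of the preceding theorem, which is just the Fibonacci recurrence. Set $a_n:=|\tilt\Lambda^2_n|$ for $\Lambda^2_n\in\{A^2_n,D^2_n\}$; by that theorem $a_n=a_{n-1}+a_{n-2}$ holds once $n$ is large enough (for $A^2_n$ as soon as $n\geq 3$, since then $P_n$ is the required indecomposable projective-injective module with $\Soc(P_n)\cong S_{n-1}$ and $\overline{A^2_n}\cong A^2_{n-1}\times K$; for $D^2_n$ as soon as $n\geq 4$). So it only remains to fix two initial values and then to solve a homogeneous second-order linear recurrence with constant coefficients, whose characteristic polynomial $x^2-x-1$ has roots $\phi=\tfrac{1+\sqrt{5}}{2}$ and $\psi=\tfrac{1-\sqrt{5}}{2}$.

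For the initial values I would argue by direct inspection. In the $A$-series, $A^2_1=K$ gives $a_1=1$, and $A^2_2=KA_2$ is hereditary, so its $\tau$-tilting modules coincide with its tilting modules and $a_2=2$. In the $D$-series, $D^2_3=KD_3$ is the hereditary path algebra of underlying Dynkin type $A_3$, so $a_3$ equals its number of tilting modules, namely $5$; and $a_4=6$ follows by applying the theorem once to $D^2_4$, whose set $\mathcal N$ reduces to the single module $S_4\oplus S_1\oplus S_2$ (one uses $\overline{D^2_4}\cong D^2_3\times K$ and that the condition $\Hom_{D^2_3}(-,S_3)=0$ forces the $D^2_3$-part to be $S_1\oplus S_2$).

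Solving the recurrence is then routine. Writing $a_n=c_1\phi^n+c_2\psi^n$ and using $\phi^2=\phi+1$, $\psi^2=\psi+1$ to clear the quadratic terms, the $A$-series data $a_1=1$, $a_2=2$ yield $c_1\phi+c_2\psi=1$ and $c_1+c_2=1$, hence $c_1=\phi/\sqrt{5}$ and $c_2=-\psi/\sqrt{5}$, so that
$$a_n=\frac{\phi^{n+1}-\psi^{n+1}}{\sqrt{5}}=\frac{(1+\sqrt{5})^{n+1}-(1-\sqrt{5})^{n+1}}{\sqrt{5}\cdot 2^{n+1}},$$
which is part (1). For the $D$-series, writing $a_n=A\phi^{n-1}+B\psi^{n-1}$ and using in addition $\phi^3=2\phi+1$, $\psi^3=2\psi+1$, the data $a_3=5$, $a_4=6$ yield $A\phi+B\psi=1$ and $A+B=4$, hence $A=(2\sqrt{5}-1)/\sqrt{5}$ and $B=(2\sqrt{5}+1)/\sqrt{5}$, so that
$$a_n=\frac{(2\sqrt{5}-1)(1+\sqrt{5})^{n-1}+(2\sqrt{5}+1)(1-\sqrt{5})^{n-1}}{\sqrt{5}\cdot 2^{n-1}},$$
which is part (2). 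Equivalently, $|\tilt A^2_n|$ is the Fibonacci number $F_{n+1}$ and $|\tilt D^2_n|=2L_{n-1}-F_{n-1}$ for the Lucas numbers $L_k$, which one may use as a quick sanity check.

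The recurrence-solving part is completely standard; the only step that genuinely needs care is the base-case bookkeeping --- verifying that $A^2_2$, $D^2_3$ and $D^2_4$ really carry the stated numbers of $\tau$-tilting modules, and that the hypotheses of the preceding theorem (an indecomposable projective-injective $P_n$ with the right socle together with the splitting $\overline{\Lambda^2_n}\cong\Lambda^2_{n-1}\times K$) do hold down to the value of $n$ at which the recurrence is anchored. Once this is in place, matching $c_1,c_2$ (respectively $A,B$) against the displayed expressions is the short computation above.
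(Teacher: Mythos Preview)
The paper gives no proof of this corollary; it is presented as an immediate consequence of the preceding recurrence theorem, and your argument supplies exactly the routine computation the paper leaves implicit. Your initial values and the linear-algebra solution of the characteristic equation are all correct.

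One small imprecision: you say the recurrence for $D^2_n$ holds ``as soon as $n\geq 4$'', but for $n=4$ the right-hand side would involve $D^2_2$, which is not defined in the paper. In practice this causes no harm, since you then compute $a_4$ directly from Theorem~1.1 by identifying $\mathcal{N}=\{S_4\oplus S_1\oplus S_2\}$ rather than by invoking the recurrence; it would be cleaner to say the recurrence is used for $n\geq 5$ with $a_3=5$ and $a_4=6$ as the anchors. Apart from this wording issue, your proof matches what the paper intends.
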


\subsection*{Acknowledgements}

The author would like to thank Professor Zhaoyong Huang for helpful discussions. He also  thanks  the referee for the useful and detailed suggestions.  This work was partially supported by the National natural Science Foundation of China (No. 11971225).

\end{document}